\newcommand*{\MRref}[2]{\linebreak[0] \href{http://www.ams.org/mathscinet-getitem?mr=#1}{MR \textbf{#1}}}
\DeclareMathOperator{\Hom}{Hom}
\DeclareMathOperator{\Ext}{Ext}
\newcommand*{\KK}{\textup{KK}}
\newcommand*{\K}{\textup{K}}
\newcommand*{\E}{\textup{E}}
\newcommand*{\XnK}{\mathrm{X}_n\mathrm{K}}
\newcommand*{\OK}{\mathbb{O}\textup{K}}
\newcommand*{\Res}{\textup{Res}}
\newcommand*{\Colim}{\textup{Colim}}
\newcommand*{\into}{\rightarrowtail}
\newcommand*{\prto}{\twoheadrightarrow}
\newcommand*{\KKcat}{\mathfrak{KK}}
\newcommand*{\Ecat}{\mathfrak{E}}
\newcommand*{\C}{\mathbb{C}}
\newcommand*{\Z}{\mathbb{Z}}
\newcommand*{\N}{\mathbb{N}}
\newcommand*{\Open}{\mathbb{O}}     
\newcommand*{\Compacts}{\mathbb{K}}
\newcommand*{\Field}{\mathbb{F}}
\newcommand*{\Q}{\mathbb{Q}}
\newcommand*{\UHF}{M_\Q}
\newcommand*{\nb}{\nobreakdash}  
\newcommand*{\Bootstrap}{{\mathcal B}}
\newcommand*{\Cuntz}{{\mathcal O}}
\newcommand*{\Cstar}{\texorpdfstring{$C^*$\nb-}{C*}}
\newcommand*{\Star}{\texorpdfstring{$^*$\nb-}{*-}}
\newcommand*{\blank}{\text{\textvisiblespace}}
\theoremstyle{plain}
\numberwithin{equation}{section}
\theoremstyle{plain}
\newtheorem{theorem}[equation]{Theorem}
\newtheorem{proposition}[equation]{Proposition}
\theoremstyle{definition}
\newtheorem{definition}[equation]{Definition}
\newtheorem{example}[equation]{Example}
\theoremstyle{remark}
\newtheorem{remark}[equation]{Remark}
\title[Classification of certain continuous fields]{Classification of certain continuous fields of Kirchberg algebras}
\author{Rasmus Bentmann}
\address{Department of Mathematical Sciences\\University of
Copenhagen\\Universitets\-parken~5\\2100 Copenhagen~\O \\Denmark}
\email{bentmann@math.ku.dk}
\subjclass[2010]{46L35, 46L80, 46M20, 19K35}
\keywords{Classification of \Cstar{}algebras, Continuous fields, Kirchberg algebras, $\K$\nb-theory}
\thanks{The author was supported by the Danish National Research Foundation through the Centre for Symmetry and Deformation (DNRF92) and by the Marie Curie Research Training Network EU-NCG}
\begin{document}

\begin{abstract}
We show that the $\K$\nb-theory cosheaf is a complete invariant for separable continuous fields with vanishing boundary maps over a finite-dimensional compact metrizable topological space whose fibers are stable Kirchberg algebras with rational $\K$-theory groups satisfying the universal coefficient theorem. We provide a range result for fields in this class with finite-dimensional $\K$\nb-theory. There are versions of both results for unital continuous fields.
\end{abstract}

\maketitle

\section{Introduction}

The present article is part of a programme aimed at deciding when two \Cstar{}al\-ge\-bras over a (second countable) topological space~$X$ are equivalent in ideal-related $\KK$\nb-the\-ory. In consequence of a fundamental result due to Eberhard Kirchberg~\cite{Kirchberg}*{Folgerung~4.3}, this is a central question in the classification theory of non-simple purely infinite \Cstar{}algebras.

We briefly review the existing results in the literature. These are concerned with two classes of spaces, namely finite (non-Hausdorff) spaces on the one hand and finite-dimensional compact metrizable spaces on the other hand. Universal coefficient theorems (UCT), which compute the $\KK(X)$\nb-groups in terms of $\K$\nb-theoretic invariants and which imply a solution to the given classification problem, have been established for certain classes of finite $T_0$-spaces in~\cites{RS,Bonkat:Thesis,Restorff:Thesis,MN:Filtrated,Bentmann:Thesis,BK}. A solution for finite unique path spaces using a more complicated kind of invariant is provided in~\cite{Bentmann-Meyer}. For arbitrary finite spaces, the problem remains unsolved and seems rather unfeasible because certain wildness phenomena occur; see~\cite{Bentmann:Algebraic_models}. In the context of finite-dimensional compact metrizable spaces the strongest results are available in the totally disconnected case~\cites{Dadarlat-Pasnicu:Continuous_fields,DM} and in the case of the unit interval~\cites{DE,Dadarlat--Elliott--Niu,BD}.

As these examples illustrate, the feasibility of the classification problem under consideration depends critically on the space~$X$. However, it is possible to obtain solutions for more general base spaces under additional $\K$\nb-the\-o\-ret\-i\-cal assumptions. For instance, Kirchberg's isomorphism theorem~\cite{Kirchberg}*{Folgerung~2.18} states that two separable nuclear stable $\Cuntz_2$\nb-absorbing \Cstar{}algebras are isomorphic once their primitive ideal spaces are homeomorphic ($\Cuntz_2$\nb-ab\-sorp\-tion entails in particular the vanishing of all $\K$\nb-theoretic data). In~\cite{Bentmann:Real_rank_zero_and_intermediate_cancellation}, a UCT for \Cstar{}algebras with \emph{vanishing boundary maps} (as we shall define in~\S\ref{sec:vanishing_boundary_maps}) over an arbitrary finite $T_0$-space is proven.

The main result of the present article is the following; it is based on the UCT in~\cite{Bentmann:Real_rank_zero_and_intermediate_cancellation} and Dadarlat--Meyer's approximation of ideal-related $\E$\nb-theory over an infinite space~$X$ by ideal-related $\E$\nb-theory over finite quotient spaces of~$X$ from~\cite{DM}, together with Kirchberg's theorem~\cite{Kirchberg}.

\begin{theorem}
  \label{thm:main}
Let $A$ and $B$ be separable continuous fields over a finite-dimensional compact metrizable topological space~$X$ whose fibers are stable Kirchberg algebras that satisfy the UCT and have rational $\K$\nb-theory groups. Assume that~$A$ and~$B$ have vanishing boundary maps. Then any isomorphism of $\K$\nb-theory cosheaves $\OK(A)\cong \OK(B)$ lifts to a $C(X)$-linear \Star{}isomorphism $A\cong B$.
\end{theorem}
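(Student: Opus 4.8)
The plan is to realise the given cosheaf isomorphism by an invertible element of Dadarlat--Meyer's ideal-related $\E$\nb-theory group $\E(X;A,B)$ and then to invoke Kirchberg's classification theorem~\cite{Kirchberg}*{Folgerung~4.3} to lift it to a $C(X)$-linear \Star{}isomorphism $A\cong B$. This last step is available because, as separable continuous fields of stable Kirchberg algebras over the finite-dimensional space~$X$, both $A$ and~$B$ are separable, nuclear, stable and $\Cuntz_\infty$\nb-absorbing when regarded as \Cstar{}algebras over~$X$ ($\Cuntz_\infty$\nb-absorption and stability being detected fibrewise over a finite-dimensional base); for such algebras $\E(X)$\nb-equivalence agrees with $\KK(X)$\nb-equivalence and entails, by Kirchberg's theorem, $C(X)$-linear \Star{}isomorphism, with the lift realising any prescribed $\E(X)$\nb-equivalence, in particular one inducing $\OK(A)\cong\OK(B)$.

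To build the $\E(X)$\nb-equivalence I would write~$X$ as an inverse limit $X=\varprojlim X_n$ of finite $T_0$\nb-spaces and let $A_n,B_n$ be the images of $A,B$ under the induced functors to \Cstar{}algebras over~$X_n$. Dadarlat and Meyer's approximation theorem~\cite{DM} supplies a Milnor-type exact sequence
\begin{equation*}
  0\to{\varprojlim}^1\,\E(X_n;A_n,SB_n)\to\E(X;A,B)\to\varprojlim\E(X_n;A_n,B_n)\to0
\end{equation*}
(and analogously for the pairs $(B,A)$ and $(A,A)$). Each open subset of~$X_n$ is the image of an open subset of~$X$, so the $X_n$\nb-boundary maps of $A_n$ and~$B_n$ occur among the vanishing $X$\nb-boundary maps of $A$ and~$B$; hence $A_n,B_n$ have vanishing boundary maps, and being fibrewise in the bootstrap class they fall within the scope of the UCT of~\cite{Bentmann:Real_rank_zero_and_intermediate_cancellation}. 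That UCT is a natural short exact sequence whose kernel is an $\Ext^1_\Z$\nb-term assembled from the $\K$\nb-theory cosheaf; since all groups in sight are $\Q$\nb-vector spaces and $\Ext^1_\Z(V,W)=0$ for $\Q$\nb-vector spaces $V,W$, this kernel vanishes, leaving a natural isomorphism $\E(X_n;A_n,B_n)\cong\Hom\bigl(\OK(A_n),\OK(B_n)\bigr)$ in the relevant category of $\K$\nb-theory cosheaves over~$X_n$, and similarly with $SB_n$ in place of~$B_n$.

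Restricting the given isomorphism $\OK(A)\cong\OK(B)$ to the opens pulled back from~$X_n$ yields a family of isomorphisms $\OK(A_n)\cong\OK(B_n)$ compatible under the quotient maps $X_{n+1}\to X_n$, hence, via the UCT isomorphisms, a compatible family of $\E(X_n)$\nb-equivalences $\kappa_n$ with compatible inverses; as the UCT isomorphisms respect composition, $\kappa_n$ composed with its inverse equals $1\in\E(X_n;A_n,A_n)$. Provided the $\varprojlim^1$\nb-terms above vanish, the map $\E(X;A,B)\to\varprojlim\E(X_n;A_n,B_n)$ is an isomorphism, so $(\kappa_n)$ lifts to a class $\kappa\in\E(X;A,B)$, which is invertible because its image in each $\E(X_n)$ is and because the corresponding inverses assemble over the same limit, and which induces $\OK(A)\cong\OK(B)$ by naturality; Kirchberg's theorem then completes the argument.

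I expect the main obstacle to be precisely the vanishing of these $\varprojlim^1$\nb-terms, i.e.\ a Mittag--Leffler property for the inverse systems $\{\Hom(\OK(A_n),\OK(SB_n))\}_n$ of rational cosheaf-morphism groups; this is where the finite-dimensionality of~$X$ has to be brought to bear, in order to bound the complexity of the cosheaves over the~$X_n$ uniformly in~$n$. A secondary, bookkeeping-type difficulty is to set up the UCT of~\cite{Bentmann:Real_rank_zero_and_intermediate_cancellation} in the form used above so that it is compatible both with composition and with the pushforward functors attached to $X_{n+1}\to X_n$, so that an isomorphism of cosheaves over~$X$ is detected as an isomorphism simultaneously at every finite level. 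Once these points are settled, the passage to the limit and the appeal to Kirchberg's theorem are routine; the verifications that the continuous fields are $\Cuntz_\infty$\nb-absorbing and stable and that the vanishing-boundary-map condition descends to the~$X_n$ are likewise routine given the cited results.
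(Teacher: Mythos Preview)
Your overall strategy matches the paper's: reduce to producing an invertible class in $\E(X;A,B)$, approximate by the finite $T_0$-quotients~$X_n$ via the Milnor sequence~\eqref{eq:approximation}, and apply the UCT of~\cite{Bentmann:Real_rank_zero_and_intermediate_cancellation} at each finite level. Two points, however, need correction.

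First, the obstacle you flag as ``main''---the vanishing of the $\varprojlim^1$-terms---is unnecessary, and the paper never confronts it. Surjectivity of $\E_0(X;A,B)\to\varprojlim\E_0(X_n;A,B)$, which the Milnor sequence gives for free, is all that is required to lift the compatible family $(\kappa_n)$ to some class $\kappa\in\E_0(X;A,B)$; the lift need not be unique. Invertibility of~$\kappa$ is then obtained not by assembling the inverses over the limit (which, as you implicitly notice, would require killing $\varprojlim^1$ for the $(A,A)$ and $(B,B)$ systems as well) but directly from the invertibility criterion \cite{DM}*{Theorem~4.6}: since $A$ and~$B$ lie in the bootstrap class~$\Bootstrap_\E(X)$, any class inducing isomorphisms $\K_*\bigl(A(U)\bigr)\to\K_*\bigl(B(U)\bigr)$ for all $U\in\Open(X)$ is already invertible, and~$\kappa$ does this by construction. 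The finite-dimensionality of~$X$ enters only through the absorption results of~\cite{HRW} and the comparison theorem \cite{DM}*{Theorem~5.4}, not here.

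Second, the kernel in the UCT of~\cite{Bentmann:Real_rank_zero_and_intermediate_cancellation} is an $\Ext^1$-group in a category of modules over an additive category, not an $\Ext^1_\Z$-group, so the bare fact that $\Ext^1_\Z(V,W)=0$ for $\Q$-vector spaces does not dispose of it. The paper instead passes to the rationalised triangulated category $\Ecat(X_n)\otimes\Q$ and argues that, over the field~$\Q$, the vanishing-boundary-maps hypothesis makes $\XnK(A)\otimes\Q$ \emph{projective} in the relevant module category (this is where flabbiness of the cosheaf is really used), whence the UCT collapses to an isomorphism. The extra tensor with~$\Q$ is then removed by showing that $A$ and~$B$ are $M_\Q$-absorbing---fibrewise by Kirchberg--Phillips, globally by~\cite{HRW}---so that $\E_0(X_n;A,B)$ is already a $\Q$-vector space. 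This last step is not visible in your sketch but is essential.
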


The $\K$\nb-theory cosheaf~$\OK$ is a rather simple (but large) $\K$\nb-theoretic invariant which we shall define in~\S\ref{sec:vanishing_boundary_maps}; it comprises the $\K$\nb-theory groups of all (distinguished) ideals of the algebra, together with the maps induced by all inclusions of such ideals.

The proof of this theorem is concluded in~\S\ref{sec:proof}. We provide a version of the theorem for unital algebras in Theorem~\ref{thm:unital}. An Abelian group~$G$ is \emph{rational} if it is isomorphic to its tensor product with the field of rational numbers~$\Q$; this is equivalent to $G$ being torsion-free and divisible and to $G$ being a vector space over~$\Q$.

Our method of proof is largely parallel to the one in~\cite{BD}, where the UCT from~\cite{BK} for \Cstar{}algebras over finite accordion spaces was used, based on the observation that the unit inverval has sufficiently many finite quotients of accordion type. The main result of the present article is instead valid for an arbitrary finite-dimensional compact metrizable base space, but this comes at the expense of the assumption of vanishing boundary maps.

Using a result from~\cite{HRW}, Kirchberg's isomorphism theorem for $\Cuntz_2$\nb-absorbing \Cstar{}algebras mentioned above implies in particular that a separable continuous \Cstar{}bundle over a finite-dimensional compact metrizable space~$X$ whose fibers are stable UCT Kirchberg algebras with trivial $\K$\nb-theory groups is isomorphic to the trivial \Cstar{}bundle $C(X,\Cuntz_2\otimes\Compacts)$; see also~\cite{Dadarlat:Finite-dimensional}. Our classification result may be considered as an extension of this automatic triviality theorem for continuous $\Cuntz_2\otimes\Compacts$-bundles: instead of asking the \Cstar{}bundles to have entirely trivial $\K$\nb-theory, we only require the collection of the $\K$\nb-theory groups of all ideals in the algebra to form a flabby cosheaf of $\Z/2$\nb-graded $\Q$\nb-vector spaces (this terminology is explained in~\S\ref{sec:vanishing_boundary_maps}).

In Section~\ref{sec:range}, we determine the range of the invariant in the classification result above under the additional assumption of finite-dimensional $\K$\nb-theory. More precisely, we show:

\begin{theorem}
    \label{thm:range}
Let~$M$ be a flabby cosheaf of $\Z/2$\nb-graded $\Q$\nb-vec\-tor spaces on~$X$ such that $M(X)$ is finite-dimensional. Then~$M$ is a direct sum of a finite number of skyscraper cosheaves and $M\cong\OK(A)$ for a continuous field~$A$ as in Theorem~\textup{\ref{thm:main}}.
\end{theorem}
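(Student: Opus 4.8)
The plan is to first prove the structural assertion---that a flabby cosheaf~$M$ of $\Z/2$\nb-graded $\Q$\nb-vector spaces with $\dim_\Q M(X)<\infty$ is a finite direct sum of skyscraper cosheaves---and then to realize such a direct sum as $\OK(A)$ for an explicitly built continuous field~$A$. Throughout, $S_x(L)$ denotes the skyscraper cosheaf at a point $x\in X$ with value a $\Z/2$\nb-graded $\Q$\nb-vector space~$L$, so that $S_x(L)(U)=L$ when $x\in U$ and $S_x(L)(U)=0$ otherwise.

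For the decomposition, the key point is that flabbiness makes every corestriction map of~$M$ injective, so that each $M(U)$ is canonically a subspace of the finite-dimensional space~$M(X)$; together with the cosheaf axioms---in particular $M\bigl(\bigsqcup_iU_i\bigr)=\bigoplus_iM(U_i)$ for disjoint opens, and $M\bigl(\bigcup_\alpha U_\alpha\bigr)=0$ when all $M(U_\alpha)=0$---this confines the ``support'' of~$M$ to finitely many points. Concretely, I would put $M_x\defeq\bigcap_{U\ni x}M(U)$, which by finite-dimensionality and downward directedness is attained on some open neighbourhood $U_x$ of~$x$; show that $Z\defeq\{x\in X:M_x\neq0\}$ has at most $\dim_\Q M(X)$ elements (otherwise $\dim_\Q M(X)+1$ of its points, with pairwise disjoint neighbourhoods carrying the nonzero values $M_{x_i}$, would via the disjoint-union axiom and injectivity force $\dim_\Q M(X)$ too large); pick pairwise disjoint neighbourhoods $U_i$ of the points $x_i\in Z$ with $M(U_i)=M_{x_i}$ and an open $W\supseteq X\setminus Z$ with $M(W)=0$ (possible since $M_y=0$ for $y\notin Z$); and then read off from the cosheaf axiom applied to the cover $V=(V\cap W)\cup\bigcup_i(V\cap U_i)$---noting that each $V\cap U_i$ with $x_i\notin V$ lies in~$W$ and hence has vanishing $M$\nb-value---that $M(V)=\bigoplus_{x_i\in V}M_{x_i}$, with the canonical summand inclusions as corestrictions, for every open~$V$. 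This yields $M\cong\bigoplus_{i=1}^mS_{x_i}(M_{x_i})$ with the $M_{x_i}$ finite-dimensional, which is the first assertion.

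By the decomposition it then suffices to construct, for given distinct $x_1,\dots,x_m\in X$ and finite-dimensional $\Z/2$\nb-graded $\Q$\nb-vector spaces $L_1,\dots,L_m$, a field~$A$ as in Theorem~\ref{thm:main} with $\OK(A)\cong\bigoplus_iS_{x_i}(L_i)$. A direct sum of fields realizing the individual skyscrapers is no good---its fibres would fail to be simple---so instead I would choose stable UCT Kirchberg algebras $D_i$ with $\K_*(D_i)\cong L_i$ (existing by a range result, unique by Kirchberg--Phillips, since $L_i$ is countable), fix $\Star$embeddings $D_i\into F\defeq\Cuntz_2\otimes\Compacts$ given by Kirchberg's embedding theorem (as $D_i$ is separable and nuclear), with images $D_i'\subseteq F$, and set
\[
  A\defeq\bigl\{\,f\in C(X,F):f(x_i)\in D_i'\ \text{ for }i=1,\dots,m\,\bigr\}.
\]
This is a separable \Cstar{}subalgebra of $C(X,F)$ and a $C(X)$\nb-algebra, and since $y\mapsto\|f(y)\|$ is continuous on~$X$ for every $f\in A$ it is a continuous field, with fibre $D_i'\cong D_i$ at~$x_i$ and fibre~$F$ at all other points; in particular every fibre is a stable UCT Kirchberg algebra with rational $\K$\nb-theory. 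For an open $W\subseteq X$ one has $A(W)=C_0(W)\cdot A=\{f\in C_0(W,F):f(x_i)\in D_i'\text{ for }x_i\in W\}$, which sits in a short exact sequence
\[
  0\longrightarrow C_0(W\setminus Z,F)\longrightarrow A(W)\longrightarrow\bigoplus_{x_i\in W}D_i'\longrightarrow 0,\qquad Z=\{x_1,\dots,x_m\},
\]
since $W\cap Z$ is closed in~$W$. As $\K_*(F)=0$, the Künneth theorem gives $\K_*\bigl(C_0(W\setminus Z,F)\bigr)=\K_*\bigl(C_0(W\setminus Z)\otimes F\bigr)=0$, so the boundary map of this extension vanishes and $\K_*(A(W))\cong\bigoplus_{x_i\in W}\K_*(D_i)\cong\bigoplus_{x_i\in W}L_i$, naturally in~$W$ via evaluation at the~$x_i$; hence $\OK(A)\cong\bigoplus_iS_{x_i}(L_i)$. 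The same argument applied to the subquotients $A(V)/A(U)$ for opens $U\subseteq V$ shows that their $\K$\nb-theory six-term sequences degenerate into split short exact sequences in each degree, so all connecting maps vanish; thus~$A$ has vanishing boundary maps and is a field as required.

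The main obstacle is the simplicity requirement on the fibres: it forbids the naive ``direct sum of fields'' realization and forces one to fit the algebras $D_i$ into a single field over~$X$, which the embeddings into $F=\Cuntz_2\otimes\Compacts$ make possible; after that, computing $\OK(A)$ and checking the boundary maps is a brief matter of one short exact sequence and the Künneth theorem. On the structural side, the point needing care is to extract from flabbiness and the finiteness of $\dim_\Q M(X)$ precisely the finite set of support points, and to verify that the corestrictions of~$M$ are the canonical summand inclusions.
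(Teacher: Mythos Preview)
Your proof is correct, and the realization step---building $A$ as the pullback of $C(X,\Cuntz_2\otimes\Compacts)$ along embeddings $D_i\hookrightarrow\Cuntz_2\otimes\Compacts$ at finitely many points and reading off $\OK(A)$ from the evident extension with $\K$\nb-theoretically trivial kernel---is exactly the (stable version of the) construction the paper uses.

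The structural half, however, follows a genuinely different route. The paper argues by induction on $\dim_\Q M(Y)$: it invokes a result of Bredon to find a point $y$ with $M(Y\setminus\overline{\{y\}})\subsetneq M(Y)$, forms the subcosheaf $N(U)=M(U\setminus\overline{\{y\}})$, identifies the quotient $M/N$ as a skyscraper at~$y$, and then splits the resulting extension via an $\Ext^1$\nb-computation and the Mittag--Leffler condition. Your argument is instead direct and elementary: you locate the finite support $Z=\{x:M_x\neq 0\}$ by a pigeonhole count using disjoint neighbourhoods, produce a single open $W=X\setminus Z$ on which $M$ vanishes, and decompose $M(V)$ for every open~$V$ in one stroke from the cosheaf axiom. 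Your approach avoids both the external reference to Bredon and the homological splitting step, at the cost of using that $X$ is Hausdorff (to separate the support points); the paper's inductive argument is stated and valid for an arbitrary topological space. For the theorem as stated---over a compact metrizable~$X$---your argument is self-contained and arguably cleaner.
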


This theorem also has an analogue for unital continuous fields. The range question in the general case where~$M(X)$ may be countably infinite-dimensional remains open. We provide an example to illustrate the greater complexity in this case.

\subsection*{Acknowledgement}
The author is grateful to Marius Dadarlat for helpful conversations on the topic of the present article. He thanks the referee for a number of valuable suggestions.

\section{Preparations}

Throughout this article, we let~$X$ denote a finite-dimensional compact metrizable topological space (arbitrary topological spaces will be denoted by~$Y$). The topology of~$X$ (its lattice of open subsets) is denoted by~$\Open(X)$. We choose an ordered basis $(U_n)_{n\in\N}$ for~$\Open(X)$ and consider the (finite) $T_0$\nb-quotient~$X_n$ of~$X$ equipped with the topology~$\Open(X_n)$ generated by the family $\{U_1,\ldots,U_n\}$ (see \cite{DM}*{\S3}).

Our reference for continuous fields of \Cstar{}algebras (or, synonymically, \Cstar{}bun\-dles) is~\cite{Dixmier:Cstar}. For basic definitions, facts and notation concerning \Cstar{}algebras over (possibly non-Hausdorff) topological spaces, we refer to~\cite{MN:Bootstrap}. Versions of $\KK$\nb-theory and $\E$\nb-theory for separable \Cstar{}algebras over second countable topological spaces have been constructed in~\cite{MN:Bootstrap} and~\cite{DM}, respectively. By \cite{DM}*{Theorem 3.2}, there is a natural short exact sequence of $\Z/2$-graded Abelian groups
\begin{equation}
      \label{eq:approximation}
  \varprojlim\nolimits^1 \E_{*+1}(X_n;A,B)
  \into \E_*(X;A,B) \prto
  \varprojlim \E_*(X_n;A,B)
\end{equation}
for every pair $A$, $B$ of separable \Cstar{}algebras over~$X$.

Recall from \cite{MN:Bootstrap}*{\S3.2} that there is an exterior product for $\KK(X)$\nb-theory. In particular, we can form the (minimal) tensor product of a \Cstar{}algebra~$A$ over~$X$ with a \Cstar{}algebra~$D$ and obtain a \Cstar{}algebra $A\otimes D$ over~$X$. We let $M_\Q$ denote the universal UHF-algebra. Hence $\K_0(M_\Q)\cong\Q$ and $\K_1(M_\Q)=0$. A \Cstar{}algebra~$B$ (over~$X$) is called \emph{$M_\Q$\nb-absorbing} if $B\cong B\otimes M_\Q$. If~$A$ and~$B$ are \Cstar{}algebras over~$X$ and~$B$ is $M_\Q$\nb-absorbing, then the exterior product
\begin{equation}
  \label{eq:Q-vector_space_structure}
\KK_*(X;A,B)\otimes\KK_*(\C,M_\Q)\to\KK_*(X;A\otimes\C,B\otimes M_\Q) \cong\KK_*(X;A,B)
\end{equation}
turns $\KK_*(X;A,B)$ into a rational vector space.

\section{Vanishing boundary maps and flabby cosheaves}
  \label{sec:vanishing_boundary_maps}

A \Cstar{}algebra~$A$ over an arbitrary topological space~$Y$ is said to have \emph{vanishing boundary maps} if the natural map $i_U^V\colon\K_*\bigl(A(U)\bigr)\to\K_*\bigl(A(V)\bigr)$ is injective for all open subsets $U\subseteq V\subseteq Y$ (it suffices to consider the case $V=Y$); this is equivalent to the condition in \cite{Bentmann:Real_rank_zero_and_intermediate_cancellation}*{Definition~3.2} because of the six-term exact sequence.

If~$A$ has vanishing boundary maps, one can deduce from the Mayer--Vietoris sequence and continuity of $\K$\nb-theory that, for every covering $(V_i)_{i\in I}$ of an open subset $V\subseteq Y$ by open subsets $V_i\subseteq V$, one has an exact sequence
\begin{equation}
  \label{eq:cosheaf_condition}
\bigoplus_{j,k\in I} \K_*\bigl(A(V_j\cap V_k)\bigr)
\xrightarrow{(i_{V_j\cap V_k}^{V_j}-i_{V_j\cap V_k}^{V_k})}
\bigoplus_{i\in I} \K_*\bigl(A(V_i)\bigr)
\xrightarrow{(i_{V_i}^U)}
\K_*\bigl(A(V)\bigr) \longrightarrow 0;
\end{equation}
see \cite{Bredon:Cosheaves}*{Proposition~1.3}.

\begin{definition}
 \label{def:K-theory_cosheaf}
The \emph{$\K$\nb-theory cosheaf} $\OK^Y(A)$ of a \Cstar{}algebra~$A$ over~$Y$ with vanishing boundary maps consists of the collection of $\Z/2$\nb-graded Abelian groups $\bigl(\K_*\bigl(A(U)\bigr)\mid U\in\Open(Y)\bigr)$ together with the collection $\bigl(i_U^V\mid V\in\Open(Y), U\in\Open(V)\bigr)$ of graded group homomorphisms.
\end{definition}

In Theorem~\ref{thm:main} we briefly wrote $\OK(A)$ for $\OK^X(A)$. For an arbitrary \Cstar{}al\-ge\-bra~$A$ over~$Y$, $\OK^Y(A)$ would only define a precosheaf, that is, a covariant functor on~$\Open(X)$. However, if~$A$ has vanishing boundary maps, then by~\eqref{eq:cosheaf_condition}, $\OK^Y(A)$ is indeed a \emph{flabby cosheaf} of $\Z/2$\nb-graded Abelian groups in the technical sense of~\cite{Bredon:Cosheaves}*{\S1}. We reproduce the definition below for the reader's convenience. The partially ordered set~$\Open(Y)$ is considered as a category with morphisms given by inclusions.

\begin{definition}
A \emph{precosheaf} on~$Y$ is a covariant functor~$M$ from $\Open(Y)$ to the category of modules over some ring. For open subsets $U\subseteq V\subseteq Y$, the induced map $M(U)\to M(V)$ is denoted by~$i_U^V$. A precosheaf~$M$ is a \emph{cosheaf} if the sequence
\begin{equation}
  \label{eq:general_cosheaf_exact_sequence}
\bigoplus_{j,k\in I} M(V_j\cap V_k)
\xrightarrow{(i_{V_j\cap V_k}^{V_j}-i_{V_j\cap V_k}^{V_k})}
\bigoplus_{i\in I} M(V_i)
\xrightarrow{(i_{V_i}^U)}
M(V)\longrightarrow 0
\end{equation}
is exact for every open covering $(V_i)_{i\in I}$ of an open subset $V\subseteq Y$. It is \emph{flabby} if the map $i_U^V\colon M(U)\to M(V)$ is injective for all open subset $U\subseteq V\subseteq Y$. A morphism of cosheaves is a natural transformation of the corresponding functors.
\end{definition}

\begin{remark}
  \label{remark:restrict_to_basis}
The invariant~$\OK$ is necessarily large (in a non-technical sense), as its purpose is to classify certain \Cstar{}bundles that need not be locally trivial. However, as we shall see in the proof of Theorem~\protect\ref{thm:main}, we may minimize its size without losing essential information by restricting to a fixed countable basis of~$\Open(X)$.
\end{remark}

\section{Proof of Theorem~\protect\ref{thm:main}}
  \label{sec:proof}
We are now prepared to prove our main result. As in Theorem~\ref{thm:main}, we abbreviate $\OK^X$ by $\OK$. Assume that~$A$ and~$B$ as in Theorem~\ref{thm:main} are given. By \cite{BD}*{Propositions~2.8 and~2.10}, $A$~and~$B$ are stable, nuclear and $\Cuntz_\infty$\nb-ab\-sor\-bing, and belong to the $\E(X)$\nb-theoretic bootstrap class $\Bootstrap_\E(X)$ defined in \cite{DM}*{Definition~4.1}. Hence, by the comparison theorem \cite{DM}*{Theorem~5.4}, Kirchberg's classification theorem~\cite{Kirchberg}*{Folgerung~4.3} and the invertibility criterion~\cite{DM}*{Theorem~4.6}, it suffices to show that a given homomorphism $\OK(A)\to \OK(B)$ lifts to an element in $\E_0(X;A,B)$.

Consider now one of the approximating spaces~$X_n$ (this may have the homeomorphism type of any finite $T_0$\nb-space). We apply the machinery of homological algebra in triangulated categories~\cite{MN:HomologicalI} to the category $\Ecat(X_n)\otimes\Q$ whose objects are separable \Cstar{}algebras over~$X_n$ and whose morphism groups are given by $\E_0(X_n,\blank,\blank)\otimes\Q$. This is indeed a triangulated category because $\Ecat(X_n)$ is triangulated (see~\cite{DM}) and $\Q$ is flat (compare~\cite{Inassaridze-Kandelaki-Meyer}). In~\cite{Bentmann:Real_rank_zero_and_intermediate_cancellation}*{\S4--5}, we defined a homological functor $\XnK$ on $\KKcat(X_n)$ and indicated that it is universal with respect to the homological ideal given by the kernel of $\XnK$ on morphisms. Similarly, we have a homological functor $\XnK\otimes\Q$ on $\Ecat(X_n)\otimes\Q$ that is universal with respect to its kernel. Since we are working over the field~$\Q$, an argument as in \cite{Bentmann:Real_rank_zero_and_intermediate_cancellation}*{Lemma~4.6} shows that $\XnK(A)$ is projective since $A$ has vanishing boundary maps. From \cite{MN:HomologicalI}*{Theorem~3.41}, we obtain an isomorphism $\E_0(X_n;A,B)\otimes\Q\cong\Hom\bigl(\XnK(A)\otimes\Q,\XnK(B)\otimes\Q\bigr)$. By \cite{Bentmann:Real_rank_zero_and_intermediate_cancellation}*{Lemma~4.3} and the natural isomorphisms $\Colim\circ\XnK\cong\OK^{X_n}$ and $\Res\circ\OK^{X_n}\cong\XnK$, we may replace $\XnK$ with $\OK^{X_n}$ and obtain the identification
\begin{equation}
   \label{eq:finite_rational_UCT}
\E_0(X_n;A,B)\otimes\Q\cong\Hom\bigl(\OK^{X_n}(A)\otimes\Q,\OK^{X_n}(B)\otimes\Q\bigr). 
\end{equation}
Since the fibers of~$A$ and~$B$ have rational $\K$\nb-theory groups, they are $M_\Q$\nb-absorbing by the Kirchberg--Phillips classification theorem~\cite{Rordam:Classification_survey}*{\S8.4}. By~\cite{HRW}, the algebras~$A$ and~$B$ themselves are $M_\Q$\nb-absorbing. Hence, by~\eqref{eq:Q-vector_space_structure}, \eqref{eq:finite_rational_UCT}, the comparison theorem \cite{DM}*{Theorem~5.4} and the K\"unneth formula for tensor products,
\begin{equation}
  \label{eq:uct_for_X_n}
\E_0(X_n;A,B)\cong\Hom\bigl(\OK^{X_n}(A),\OK^{X_n}(B)\bigr)
\end{equation}
because $\Q\otimes\Q\cong\Q$. The continuity of $\K$\nb-theory with respect to inductive limits shows that the canonical map $\Hom\bigl(\OK(A),\OK(B)\bigr)\to\varprojlim\Hom\bigl(\OK^{X_n}(A),\OK^{X_n}(B)\bigr)$ is an isomorphism. Hence $\varprojlim \E_0(X_n;A,B)\cong\Hom\bigl(\OK(A),\OK(B)\bigr)$. The claim now follows from the exact sequence~\eqref{eq:approximation}.

\subsection{Classification of unital \Cstar{}bundles}
For a unital \Cstar{}bundle over~$X$, we may equip $\OK(A)$ with the unit class $[1_A]\in\K_0(A)$. This pair is denoted by $\OK^+(A)$; it is a \emph{pointed} cosheaf, that is, a cosheaf~$M$ with a distinguished element in the degree-zero part of~$M(X)$. Morphisms of such pointed cosheaves are of course required to preserve the distinguished element. By \cite{Eilers-Restorff-Ruiz:generalized_meta_theorem}*{Theorem~3.3}, we immediately obtain the following version of our main result for unital algebras.

\begin{theorem}
  \label{thm:unital}
Let $A$ and $B$ be separable unital continuous fields over~$X$ whose fibers are UCT Kirchberg algebras with rational $\K$\nb-theory groups. Assume that $A$~and~$B$ have vanishing boundary maps. Then any isomorphism $\OK^+(A)\cong \OK^+(B)$ lifts to a $C(X)$-linear \Star{}isomorphism $A\cong B$.
\end{theorem}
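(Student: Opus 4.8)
The plan is to reduce the unital case to the stable case already treated in Theorem~\ref{thm:main}. First I would stabilise: given unital continuous fields $A$ and $B$ over~$X$ with UCT Kirchberg fibres having rational $\K$\nb-theory and vanishing boundary maps, pass to $A\otimes\Compacts$ and $B\otimes\Compacts$. Tensoring with~$\Compacts$ preserves the base space~$X$, the property of having vanishing boundary maps, the UCT, and (since $\K$\nb-theory is unchanged) the rationality of the fibre $\K$\nb-groups; moreover the fibres become \emph{stable} Kirchberg algebras. Thus $A\otimes\Compacts$ and $B\otimes\Compacts$ satisfy the hypotheses of Theorem~\ref{thm:main}, and $\OK(A\otimes\Compacts)\cong\OK(A)$ naturally, similarly for~$B$.

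Next I would unpack what has to be classified. On the stabilisations, the $\K$\nb-theory cosheaf carries no basepoint, but a $C(X)$-linear isomorphism $A\cong B$ is equivalent, via the standard bijection between unital algebras and their stabilisations together with a distinguished rank-one projection / unit class, to a stabilised isomorphism that respects the class of the unit. So the content of Theorem~\ref{thm:unital} is exactly that the invariant $\OK$ together with the extra datum of the unit class $[1_A]\in\K_0(A)=\K_0(A\otimes\Compacts)$ is complete. This is precisely the kind of statement that the general metatheorem of Eilers--Restorff--Ruiz \cite{Eilers-Restorff-Ruiz:generalized_meta_theorem}*{Theorem~3.3} is designed to produce: given a classification functor that is complete and has a computed range on the stable category, it automatically yields the corresponding unital classification with the unit class adjoined, provided the relevant algebras are unital, separable, nuclear and absorb~$\Cuntz_\infty$. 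Here $A$ and~$B$ are unital by hypothesis and, being continuous fields of Kirchberg algebras over a finite-dimensional compact metrizable space, they are nuclear and $\Cuntz_\infty$\nb-absorbing by \cite{BD}*{Propositions~2.8 and~2.10}, and they lie in the relevant bootstrap class; the needed completeness over the stable category is Theorem~\ref{thm:main}.

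So the argument is: (i)~verify the hypotheses of \cite{Eilers-Restorff-Ruiz:generalized_meta_theorem}*{Theorem~3.3} for the functor $A\mapsto\OK^X(A)$ on the category of separable $\Cuntz_\infty$\nb-absorbing nuclear continuous fields over~$X$ with vanishing boundary maps and rational UCT Kirchberg fibres, the ``strong classification'' input being Theorem~\ref{thm:main}; (ii)~invoke the metatheorem to conclude that on the unital subcategory the pointed invariant $\OK^+$ is complete; (iii)~translate the resulting abstract isomorphism into a $C(X)$-linear \Star{}isomorphism, which is immediate since the functor $\OK^X$ is defined via $C(X)$-structure and the classifying maps in Kirchberg's theorem are realised by actual $C(X)$-linear $^*$\nb-homomorphisms.

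The main obstacle is purely bookkeeping: one must check that all the structural hypotheses of the Eilers--Restorff--Ruiz metatheorem hold in our setting, in particular that the category of continuous fields with vanishing boundary maps over~$X$ is closed under the operations used in their proof (stabilisation, forming mapping cones or unitisations, direct sums) and that $\OK$ behaves functorially under these. None of this is deep—vanishing boundary maps is an exactness condition preserved by the relevant constructions, and $\OK$ is manifestly a functor—so the proof of Theorem~\ref{thm:unital} is genuinely a one-line consequence once Theorem~\ref{thm:main} is in hand, exactly as asserted in the excerpt.
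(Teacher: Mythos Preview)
Your proposal is correct and follows exactly the paper's approach: the paper's entire proof of Theorem~\ref{thm:unital} is the single sentence ``By \cite{Eilers-Restorff-Ruiz:generalized_meta_theorem}*{Theorem~3.3}, we immediately obtain the following version of our main result for unital algebras,'' and you have correctly unpacked what that citation entails, namely feeding the stable classification of Theorem~\ref{thm:main} into the Eilers--Restorff--Ruiz metatheorem to promote it to a unital classification with the unit class adjoined.
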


\section{Range results}
  \label{sec:range}
We investigate the question of the range of the invariant in Theorem~\ref{thm:unital} (the same considerations apply mutatis mutandis and without keeping track of the unit class in the stable case and yield a proof for Theorem~\ref{thm:range}). For the results in this section, it would suffice to assume that~$X$ is a compact metrizable space.

If a unital \Cstar{}bun\-dle~$A$ of the form classified by our result is locally trivial on an open subset~$U$ of~$X$, then $A(U)$ must be isomorphic to $C_0(U,\Cuntz_2)$. Hence interesting examples cannot be locally trivial (around every point in~$X$).

\begin{example}
We will now describe some basic non-trivial examples of \Cstar{}bundles satisfying the conditions in our classification theorem. Let~$D_1$, $\ldots$, $D_n$ be unital UCT Kirchberg algebras. By the Exact Embedding Theorem \cite{Kirchberg-Phillips:Embedding}*{Theorem~2.8}, we may find unital \Star{}monomorphisms $\gamma_i\colon D_i\to\Cuntz_2$. For points~$x_1$, $\ldots$, $x_n$ in~$X$, we define
\begin{equation}
  \label{eq:field_with_finitely_many_singularities}
 A=\{f\in C(X,\Cuntz_2)\mid \textup{$f(x_i)\in\gamma_i(D_i)$ for $i=1,\ldots,n$}\}.
\end{equation}
This is clearly a continuous field of Kirchberg algebras, with fiber~$D_i$ at~$x_i$ and fiber~$\Cuntz_2$ at all other points. A simple computation using excision shows that
\[
 \K_*\bigl(A(U)\bigr)\cong\bigoplus_{i\colon x_i\in U}\K_*(D_i).
\]
Hence~$\OK(A)$ is the direct sum of so-called skyscraper cosheaves~$i_{x_i}\bigl(\K_*(D_i)\bigr)$ based at~$x_i$ with coefficient group~$\K_*(D_i)$. Here~$i_x(G)$ is defined by
\[
 i_x(G)(U)= \begin{cases} G & \textup{if $x\in U$,} \\ 0 & \textup{else}. \end{cases}
\]
These cosheaves are indeed flabby. It follows that the continuous field~$A$ has vanishing boundary maps. So, if the algebras~$D_i$ have rational $\K$\nb-theory groups, then~$A$ satisfies the conditions of Theorem~\ref{thm:unital}. Under the identification $\K_0(A)\cong\bigoplus_{i=1}^n\K_0(D_i)$, we have $[1_A]=\sum_{i=1}^n [1_{D_i}]$. Using the range result for $\K$\nb-theory on unital UCT Kirchberg algebras~\cite{Rordam:Classification_survey}*{\S4.3}, it follows that an arbitrarily pointed finite direct sum of skyscraper cosheaves whose coefficient groups are countable $\Z/2$\nb-graded Abelian groups can be realized as the pointed $\K$\nb-theory cosheaf of a unital continuous field as in~\eqref{eq:field_with_finitely_many_singularities}.
\end{example}

The following proposition shows that, if~$A$ is a unital continuous field as in Theorem~\ref{thm:unital} and the $\Q$\nb-vec\-tor space $\K_*(A)$ is finite-dimensional, then $A$ must be of the form~\eqref{eq:field_with_finitely_many_singularities}.

\begin{proposition}
  \label{pro:finite-dim_flabby}
Let~$\Field$ be a field and let~$Y$ be an arbitrary topological space. Let~$M$ be a flabby cosheaf of $\Field$\nb-vec\-tor spaces over~$Y$. If~$M(Y)$ is finite-dimensional, then $M$ is a direct sum of a finite number of skyscraper cosheaves.
\end{proposition}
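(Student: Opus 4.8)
The plan is to collapse the whole cosheaf into the single finite-dimensional vector space $V := M(Y)$ and to read off the skyscraper decomposition from a ``costalk'' analysis. First I would exploit flabbiness: for every open $U$ the map $i_U^Y$ is injective, so $M(U)$ is canonically the subspace $N(U) := i_U^Y\bigl(M(U)\bigr)\subseteq V$, and from $i_U^Y = i_{U'}^Y\circ i_U^{U'}$ the assignment $U\mapsto N(U)$ is monotone. The surjectivity part of the cosheaf axiom~\eqref{eq:general_cosheaf_exact_sequence} says $N\bigl(\bigcup_i V_i\bigr) = \sum_i N(V_i)$, while applying~\eqref{eq:general_cosheaf_exact_sequence} to the cover $\{U_1,U_2\}$ of $U_1\sqcup U_2$ with $U_1\cap U_2=\emptyset$ makes the left-hand map vanish, so that $N(U_1\sqcup U_2) = N(U_1)\oplus N(U_2)$ as an internal direct sum; iterating, any finite family of pairwise disjoint opens produces an internal direct sum of the corresponding subspaces of $V$.

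Next I would define the costalk $M_x$ at a point $x\in Y$. The opens containing $x$ are closed under finite intersection and the dimension function $U\mapsto\dim N(U)$ is monotone and bounded by $\dim V$, so $\dim N(W)$ attains a minimum over the opens $W\ni x$; if $W$ and $W'$ both realise it, then $N(W\cap W')$ lies in $N(W)\cap N(W')$ with the same dimension, hence $N(W)=N(W')=N(W\cap W')$. Call this common subspace $M_x$. Intersecting such a minimal $W$ with an arbitrary open $U\ni x$ shows $M_x\subseteq N(U)$; and covering $U$ by the opens $W_x\cap U$ for $x\in U$ and using surjectivity in~\eqref{eq:general_cosheaf_exact_sequence} gives $N(U) = \sum_{x\in U}M_x$, in particular $V = \sum_{x\in Y}M_x$.

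The crux is that $S := \{x\in Y : M_x\neq 0\}$ is finite. Given distinct $x_1,\dots,x_k\in S$, I would choose pairwise disjoint opens $V_i\ni x_i$ (this is the one step that uses a separation property of $Y$, which is harmless here since the spaces of interest are metrizable); then $M_{x_i}\subseteq N(V_i)$ and the $N(V_i)$ are in direct-sum position in $V$, so $\sum_{i=1}^k\dim M_{x_i}\le\dim V$, forcing $k\le\dim V$. Hence $S$ is finite, and applying the disjoint-union property to $S$ itself upgrades the previous formula to an internal direct sum $N(U) = \bigoplus_{x\in U\cap S}M_x$ for every open $U$, and in particular $V=\bigoplus_{x\in S}M_x$.

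Finally I would assemble the isomorphism $M\cong\bigoplus_{x\in S}i_x(M_x)$. For each open $U$ the corestriction of $i_U^Y$ is an isomorphism from $M(U)$ onto $N(U) = \bigoplus_{x\in U\cap S}M_x$, which is precisely the value at $U$ of the cosheaf $\bigoplus_{x\in S}i_x(M_x)$; the relation $i_{U'}^Y\circ i_U^{U'} = i_U^Y$ makes these isomorphisms compatible with the corestriction maps on both sides, so together they constitute an isomorphism of cosheaves. I expect the passage from the local condition $M_x\neq 0$ to a global bound on the number of such points --- that is, the finiteness of the support --- to be the only genuinely nontrivial step, and the one place where some regularity of $Y$ must enter.
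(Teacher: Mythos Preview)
Your direct costalk analysis is attractive and genuinely different from the paper's proof, but the separation step you flag is a real gap: the proposition is stated for an \emph{arbitrary} space~$Y$, and on non-Hausdorff spaces your support set~$S$ need not behave as you want. For instance, on the Sierpi\'nski space $Y=\{0,1\}$ (opens $\emptyset$, $\{1\}$, $Y$) with $M=i_1(\Field)$ one has $M(\{1\})=M(Y)=\Field$, and your recipe gives $M_0=N(Y)=\Field=N(\{1\})=M_1$; the two costalks coincide as subspaces of~$V$, there are no disjoint opens separating $0$ and~$1$, and the intended decomposition $V=M_0\oplus M_1$ fails even though~$M$ is visibly a single skyscraper. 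The two-point indiscrete space gives the same phenomenon. So as written you only establish the Hausdorff case---enough for the metrizable base spaces in the application, but not the proposition as stated.

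The paper's argument sidesteps this entirely. It inducts on $\dim M(Y)$: a result of Bredon supplies a point~$y$ with $M(Y\setminus\overline{\{y\}})\subsetneq M(Y)$; the subcosheaf $N(U)=M(U\setminus\overline{\{y\}})$ is again flabby with $\dim N(Y)<\dim M(Y)$, hence a finite sum of skyscrapers by induction; the quotient $M/N$ is identified as a single skyscraper~$i_y(V)$; and the extension $N\rightarrowtail M\twoheadrightarrow i_y(V)$ is split via a Mittag--Leffler $\Ext^1$ computation. The crucial difference is that the paper works with the open complements $Y\setminus\overline{\{y\}}$, which are available in any space, rather than trying to separate distinct points by disjoint neighbourhoods. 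If you want to rescue your approach in full generality, you would have to replace the pointwise costalks by something that respects the specialization order (e.g.\ index by closures~$\overline{\{y\}}$ rather than by points), which in effect rediscovers the paper's inductive mechanism.
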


\begin{proof}
We proceed by induction on the dimension of~$M(Y)$. If the dimension is zero, then there is nothing to prove. Otherwise, by \cite{Bredon:Sheaf_theory}*{V.~Propo\-si\-tion~1.5}, there exists $y\in Y$ such that $M(Y\setminus\overline{\{y\}})$ is a proper subspace of~$M(Y)$. By assumption, the subcosheaf~$N$ of~$M$ defined by
\[
N(U)=M(U\setminus\overline{\{y\}})
\]
for $U\in\Open(Y)$ is a direct sum of skyscraper cosheaves. Since the quotient $Q=M/N$ vanishes on $Y\setminus\overline{\{y\}}$, it follows from the exact sequence \eqref{eq:general_cosheaf_exact_sequence} that $Q$ is a skyscraper cosheaf of the form $Q=i_{y}(V)$ for some $\Field$\nb-vec\-tor space~$V$. It remains to show that the extension $N\rightarrowtail M\twoheadrightarrow Q$ splits. We have $\Hom(i_y(V),N)\cong\Hom\bigl(V,\varprojlim\limits_{U\ni x} N(U)\bigr)$ and thus
\[
\Ext^1(Q,N)\cong\Hom\bigl(V,\mathop{\varprojlim\nolimits^1}\limits_{U\ni x} N(U)\bigr)=0
\]
by the Mittag--Leffler condition using that $N(U)$ is a finite-dimensional vector space for every~$U$.
\end{proof}

The considerations above are summarized in the following version of Theorem~\ref{thm:range} for unital continuous fields:

\begin{theorem}
    \label{thm:unital_range}
Let~$(M,m)$ be a pointed flabby cosheaf of $\Z/2$\nb-graded $\Q$\nb-vec\-tor spaces on~$X$ such that $M(X)$ is finite-dimensional. Then~$M$ is a direct sum of a finite number of skyscraper cosheaves and $(M,m)\cong\OK^+(A)$ for a continuous field~$A$ as in Theorem~\textup{\ref{thm:unital}}.
\end{theorem}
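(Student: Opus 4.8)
The plan is to combine the structural result of Proposition~\ref{pro:finite-dim_flabby} with the explicit construction carried out in the Example above; keeping track of the distinguished element~$m$ is the only extra ingredient needed for the pointed version, and in fact the stable case of Theorem~\ref{thm:range} follows by the same argument with this bookkeeping omitted.

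First I would reduce to the ungraded situation. A cosheaf of $\Z/2$\nb-graded $\Q$\nb-vector spaces amounts to a pair consisting of its even and its odd part, since the defining exact sequence~\eqref{eq:general_cosheaf_exact_sequence} is exact in each degree; writing $M=M_0\oplus M_1$ accordingly, both $M_0$ and $M_1$ are flabby cosheaves of $\Q$\nb-vector spaces with $M_0(X)$ and $M_1(X)$ finite-dimensional. Applying Proposition~\ref{pro:finite-dim_flabby} to each and collecting the skyscraper summands according to their base points, I obtain distinct points $x_1,\dots,x_n\in X$ and finite-dimensional $\Z/2$\nb-graded $\Q$\nb-vector spaces $W_1,\dots,W_n$ with a cosheaf isomorphism $M\cong\bigoplus_{i=1}^n i_{x_i}(W_i)$. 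Transporting the distinguished element through this isomorphism, I write $m=\sum_{i=1}^n m_i$ with $m_i$ lying in the degree-zero part of~$W_i$.

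Next I would realize $(M,m)$ as a pointed $\K$\nb-theory cosheaf exactly as indicated in the Example. Each $W_i$ is in particular a countable $\Z/2$\nb-graded Abelian group, so by the range result for $\K$\nb-theory of unital UCT Kirchberg algebras~\cite{Rordam:Classification_survey}*{\S4.3} there is a unital UCT Kirchberg algebra~$D_i$ together with an isomorphism $\bigl(\K_0(D_i),[1_{D_i}],\K_1(D_i)\bigr)\cong\bigl((W_i)_0,m_i,(W_i)_1\bigr)$ of pointed $\Z/2$\nb-graded Abelian groups; since $W_i$ is a $\Q$\nb-vector space, $\K_*(D_i)$ is rational. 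Using unital embeddings $\gamma_i\colon D_i\hookrightarrow\Cuntz_2$ provided by the Exact Embedding Theorem~\cite{Kirchberg-Phillips:Embedding}*{Theorem~2.8} and the points $x_1,\dots,x_n$, I form the continuous field~$A$ as in~\eqref{eq:field_with_finitely_many_singularities}. The excision computation of the Example then gives $\OK(A)\cong\bigoplus_{i=1}^n i_{x_i}\bigl(\K_*(D_i)\bigr)\cong M$, and under $\K_0(A)\cong\bigoplus_{i=1}^n\K_0(D_i)$ the unit class $[1_A]=\sum_{i=1}^n[1_{D_i}]$ is carried to $\sum_{i=1}^n m_i=m$, so $(M,m)\cong\OK^+(A)$. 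Moreover $A$ is a separable unital continuous field over~$X$ whose fibers (the $D_i$ at the points~$x_i$ and $\Cuntz_2$ at all other points) are UCT Kirchberg algebras with rational $\K$\nb-theory, and it has vanishing boundary maps because $\OK(A)$ is a direct sum of flabby skyscraper cosheaves; hence~$A$ is of the form required by Theorem~\ref{thm:unital}.

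There is no serious obstacle here: the proof is essentially an assembly of results already in place. The step requiring the most care is the bookkeeping of the distinguished element, namely invoking the form of Rørdam's range result that prescribes the position of $[1_{D_i}]$ inside $\K_0(D_i)$ and matching the decomposition $m=\sum_i m_i$ with the direct-sum decomposition of~$M$ coming from Proposition~\ref{pro:finite-dim_flabby}. One should also make sure the points $x_i$ produced by that decomposition are pairwise distinct (they are, since the skyscraper summands were grouped by base point), so that the construction~\eqref{eq:field_with_finitely_many_singularities} applies verbatim.
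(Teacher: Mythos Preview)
Your proposal is correct and follows essentially the same approach as the paper: the paper's own ``proof'' of Theorem~\ref{thm:unital_range} is simply the sentence that the preceding considerations (Proposition~\ref{pro:finite-dim_flabby} together with the Example establishing realizability of arbitrarily pointed finite direct sums of skyscraper cosheaves) summarize to the stated result. You have unpacked exactly those two ingredients, with the added care of reducing to the ungraded case before invoking Proposition~\ref{pro:finite-dim_flabby} and of tracking the decomposition of~$m$ through the direct sum---details the paper leaves implicit.
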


Combining the range result above with our classification results, we obtain an explicit description of the isomorphism classes of the classified continuous fields~$A$ whose $\K$\nb-theory $\K_*(A)$ is finite-dimensional over~$\Q$. In the case that $\K_*(A)$ is an arbitrary (countable) $\Q$\nb-vec\-tor space the situation is unclear: it remains open whether a countable direct sum of skyscraper cosheaves whose coefficient groups are countable $\Z/2$\nb-graded $\Q$-vec\-tor spaces can be realized as the $\K$\nb-theory cosheaf $\OK(A)$ of a continuous field~$A$ as in Theorem~\ref{thm:main}.

We generalize the previous example by replacing the finite set of singularities with a totally disconnected subset. This demonstrates that the assumption of finite-dimensionality in Proposition~\ref{pro:finite-dim_flabby} cannot be dropped, that is, a flabby cosheaf of vector spaces of countable dimension need not be a direct sum of skyscraper cosheaves.

\begin{example}
  \label{exa:totally_disconnected}
Let $Y\subseteq X$ be a closed, totally disconnected subset. Then $C(Y)$ is a tight \Cstar{}algebra over $Y$ with vanishing boundary maps. This follows because the \Cstar{}algebra $C(Y)$ as well as all of its ideals and quotients are AF\nb-algebras and thus have vanishing $\K_1$\nb-groups. In fact, one readily sees that $\OK\bigl(C(Y)\bigr)\cong C_c(\blank,\Z)$ (compactly supported, locally constant functions).

We let $A$ be the tight \Cstar{}algebra over $Y$ given by $C(Y)\otimes\Cuntz_\infty\otimes\UHF$. This is a unital continuous field of Kirchberg algebras over~$Y$. By Blanchard's embedding theorem~\cite{Blanchard:Subtriviality}, there is a unital \Star{}monomorphism $\alpha\colon A\hookrightarrow C(Y)\otimes\Cuntz_2$ over~$Y$. We define
\[
B=\{f\in C(X)\otimes\Cuntz_2\mid f|_Y\in\alpha(A)\}.
\]
Excision by the obvious extension $C_0(X\setminus Y)\otimes\Cuntz_2\rightarrowtail B\twoheadrightarrow A$ together with the K\"unneth formula shows that $B$ has vanishing boundary maps. By construction, $B$ is a unital separable continuous field over~$X$ whose fibers are UCT Kirchberg algebras with rational $\K$\nb-theory groups; more precisely, we have $B(x)\cong\Cuntz_\infty\otimes\UHF$ for $x\in Y$ and $B(x)\cong\Cuntz_2$ for $x\not\in Y$. Hence the restriction of $B$ to any closed subset that intersects both $Y$ and $X\setminus Y$ is nontrivial. The $\K$\nb-theory cosheaf of $B$ is given explicitly by $$U\mapsto C_c(U\cap Y,\Z)\otimes\Q.$$
This is a flabby cosheaf of rational vector spaces of countable dimension. The stalks of $\OK(B)$ are given by $\Q$ at points in $Y$ and by $0$ at points in $X\setminus Y$. (The stalk $M_x$ of a cosheaf $M$ on $X$ at a point $x\in X$ is the quotient $M(X)/M(X\setminus\{x\})$.) I~know no separable continuous field~$C$ over~$X$ with vanishing boundary maps such that the set $\{x\in X\mid \OK(C)_x\neq 0\}$ is not zero-dimensional.
\end{example}

\section{Further remarks}
  \label{sec:further_remarks}

\subsection{Real rank zero}
We briefly comment on the relationship of the assumptions in our classification theorem to real rank zero, a property that has often proved useful for classification purposes. It was shown in \cite{Pasnicu_Rordam:PI}*{Theorem~4.2} that a separable purely infinite \Cstar{}algebra~$A$ has real rank zero if and only if the primitive ideal space of~$A$ has a basis consisting of compact open subsets and~$A$ is $\K_0$\nb-liftable (meaning, in our terminology, that $A$~has ``vanishing exponential maps''). While a \Cstar{}bundle (with non-vanishing fibers) over a compact metrizable space of positive dimension cannot satisfy the first condition, the second condition of $\K_0$\nb-liftability is built into our assumptions (we also assume that $A$~has ``vanishing index maps''). As Theorem~\ref{thm:range} shows, at least in the case of finite-dimensional $\K$\nb-theory, the $\K$\nb-theory cosheaf of a separable continuous field with vanishing boundary maps has a very zero-dimensional flavour.

\subsection{Cosheaves versus sheaves}
The following explanations clarify the relationship (in the setting of fields with vanishing boundary maps) between our $\K$\nb-theory cosheaf and the $\K$\nb-theory sheaf defined in~\cite{DE} for \Cstar{}bundles over the unit interval. In \cite{Bredon:Sheaf_theory}*{Propositions~V.1.6 and~V.1.8}, Glen Bredon provides a structure result for flabby cosheaves: the compact sections functor yields a one-to-one correspondence between soft sheaves and flabby cosheaves on~$\Open(X)$. A sheaf is \emph{soft} if sections over closed subsets can be extended to global sections. If $\widehat{\OK(A)}$ denotes the soft sheaf corresponding to the flabby cosheaf $\OK(A)$, then we have $\widehat{\OK(A)}(Z)\cong\K_*\bigl(A(Z)\bigr)$ for every \emph{closed} subset $Z\subseteq X$. Regarding the range question considered in~\S\ref{sec:range}, we remark that \cite{Dadarlat--Elliott--Niu}*{The\-o\-rem~5.8} provides a range result for unital \Cstar{}bundles over the unit interval, but it is not clear when the constructed algebras have vanishing boundary maps.

\subsection{Another classification result}
We conclude the note by stating one more result which follows in essentially the same way as our main result. We comment below on the required modifications in the proof. Again, a version for unital algebras can be obtained from \cite{Eilers-Restorff-Ruiz:generalized_meta_theorem}*{Theorem~3.3}.

\begin{theorem}
  \label{theorem:parity}
Fix $i\in\{0,1\}$. Let~$A$ and~$B$ be separable continuous fields of stable UCT Kirchberg algebras over a finite-dimensional compact metrizable topological space~$X$. Assume that $\K_i\bigl(A(Z)\bigr)=0$ for all locally closed subsets $Z\subseteq X$. Then any isomorphism $\OK(A)\cong \OK(B)$ lifts to a $C(X)$-linear \Star{}isomorphism $A\cong B$.
\end{theorem}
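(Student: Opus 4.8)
The plan is to follow the proof of Theorem~\ref{thm:main} step by step, replacing the homological input (projectivity of the invariant) by an analogous argument adapted to the hypothesis that $\K_i$ vanishes on all locally closed subsets. As before, by \cite{BD}*{Propositions~2.8 and~2.10} the algebras $A$ and $B$ are stable, nuclear and $\Cuntz_\infty$\nb-absorbing and lie in $\Bootstrap_\E(X)$, so by the comparison theorem \cite{DM}*{Theorem~5.4}, Kirchberg's classification theorem and the invertibility criterion \cite{DM}*{Theorem~4.6} it suffices to lift a given homomorphism $\OK(A)\to\OK(B)$ to an element of $\E_0(X;A,B)$. Via the Milnor sequence~\eqref{eq:approximation} and continuity of $\K$\nb-theory, this reduces to proving, for each finite $T_0$\nb-quotient~$X_n$, a UCT of the form $\E_0(X_n;A,B)\cong\Hom\bigl(\OK^{X_n}(A),\OK^{X_n}(B)\bigr)$ as in~\eqref{eq:uct_for_X_n}.

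First I would observe that the hypothesis $\K_i\bigl(A(Z)\bigr)=0$ for all locally closed $Z$ descends to each $X_n$: the locally closed subsets of $X_n$ pull back to locally closed subsets of $X$, so $\K_i$ vanishes on all locally closed subsets of $X_n$ as well, and in particular $\K_i$ of all distinguished subquotients vanishes. Consequently $\XnK(A)$ (or rather the piece of it seeing degree~$1-i$) is concentrated in a single parity, and the six-term sequences for $X_n$ degenerate into short exact sequences of $\K_{1-i}$\nb-groups that are exactly the defining exact sequences of a flabby cosheaf. This is the place where the argument of \cite{Bentmann:Real_rank_zero_and_intermediate_cancellation}*{Lemma~4.6} must be revisited: instead of deducing projectivity of $\XnK(A)$ from vanishing boundary maps, I would show directly that the parity hypothesis forces $\XnK(A)$ to be projective in the relevant homological algebra over $\Ecat(X_n)$ (there is no need to tensor with~$\Q$ here, since no rationality is assumed). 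Then \cite{MN:HomologicalI}*{Theorem~3.41} gives the desired identification of $\E_0(X_n;A,B)$ with $\Hom$ between the invariants, and the natural isomorphisms $\Colim\circ\XnK\cong\OK^{X_n}$ and $\Res\circ\OK^{X_n}\cong\XnK$ from \cite{Bentmann:Real_rank_zero_and_intermediate_cancellation}*{Lemma~4.3} let us rewrite this in terms of~$\OK^{X_n}$. Passing to the inverse limit over~$n$ and invoking~\eqref{eq:approximation} concludes the proof; the unital version follows as before from \cite{Eilers-Restorff-Ruiz:generalized_meta_theorem}*{Theorem~3.3}.

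The main obstacle I anticipate is verifying that the parity condition $\K_i(A(Z))=0$ on locally closed subsets really is the right substitute for ``vanishing boundary maps'' in the projectivity argument, i.e.\ checking that the relevant homological ideal on $\Ecat(X_n)$ still has enough projectives and that $\XnK(A)$ is among them. When $\K_i$ vanishes everywhere, every boundary map out of a $\K_{1-i}$\nb-group is automatically zero for degree reasons (the target $\K_i$\nb-group is zero), so the cosheaf exact sequences~\eqref{eq:cosheaf_condition} hold and $A$ \emph{does} have vanishing boundary maps in the degree $1-i$ part; the subtlety is purely that one works with a singly-graded invariant. I expect this to go through essentially verbatim, but it is the step that requires genuine care rather than routine bookkeeping. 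Everything else — the reductions via \cite{DM} and \cite{Kirchberg}, the Milnor-sequence argument, the descent of the hypothesis to the spaces~$X_n$, and the continuity of $\K$\nb-theory — is formally identical to the proof of Theorem~\ref{thm:main}.
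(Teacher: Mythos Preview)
Your reductions are all correct and match the paper: the passage through \cite{BD}, \cite{DM} and \cite{Kirchberg}, the descent of the hypothesis to the finite quotients~$X_n$, and the Milnor-sequence argument are exactly as in the proof of Theorem~\ref{thm:main}. You also correctly observe that the parity hypothesis forces~$A$ (and, via the isomorphism $\OK(A)\cong\OK(B)$, also~$B$) to have vanishing boundary maps.

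The gap is precisely at the step you flagged as the main obstacle, and it does not go through verbatim. The parity condition does \emph{not} force $\XnK(A)$ to be projective. Already over a single point this fails: if $A$ is a UCT Kirchberg algebra with $\K_1(A)=0$ and $\K_0(A)\cong\Z/2$ (say $A=\Cuntz_3$), then $\XnK(A)=\K_*(A)$ is a torsion group and is not projective in the target category. In the proof of Theorem~\ref{thm:main} projectivity was obtained only because one worked over the field~$\Q$; once you drop the rationality assumption there is no reason for $\XnK(A)$ to be projective, and invoking \cite{MN:HomologicalI}*{Theorem~3.41} to get an \emph{isomorphism} $\E_0(X_n;A,B)\cong\Hom$ is unjustified.

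The paper's fix is to use the vanishing-boundary-maps conclusion differently. Since~$A$ has vanishing boundary maps, the full universal coefficient \emph{short exact sequence} of \cite{Bentmann:Real_rank_zero_and_intermediate_cancellation}*{Theorem~5.2} applies over each~$X_n$; this does not require projectivity of $\XnK(A)$, only that~$A$ lies in the appropriate bootstrap class with vanishing boundary maps. One then observes that the $\Ext^1$\nb-term in that sequence carries a degree shift, so when both $\XnK(A)$ and $\XnK(B)$ are concentrated in the single parity~$1-i$ the $\Ext^1$\nb-term vanishes for degree reasons. This collapses the UCT to the desired isomorphism~\eqref{eq:uct_for_X_n}, after which your argument proceeds exactly as written. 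In short: do not aim for projectivity; aim for the UCT short exact sequence and kill the obstruction term by parity.
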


Notice that we do not assume that the fibers of~$A$ and~$B$ have rational $\K$\nb-theory groups. Example~\ref{exa:totally_disconnected} provides an example of a non-trivial \Cstar{}bundle falling under the classification in Theorem~\ref{theorem:parity}.

The $\K$\nb-the\-o\-ret\-i\-cal assumption in the theorem implies that~$A$ and~$B$ have vanishing boundary maps. Hence the universal coefficient theorem \cite{Bentmann:Real_rank_zero_and_intermediate_cancellation}*{Theorem~5.2} applies (we may write $\OK^{X_n}$ instead of $\XnK$ by \cite{Bentmann:Real_rank_zero_and_intermediate_cancellation}*{Lemma~4.3}) and simplifies to an isomorphism because the relevant $\Ext^1$-term vanishes for parity reasons. The remainder of the proof is analogous.

\begin{bibsection}
  \begin{biblist}

\bib{Bentmann:Real_rank_zero_and_intermediate_cancellation}{article}{
  author={Bentmann, Rasmus},
  title={Kirchberg $X$-algebras with real rank zero and intermediate cancellation},
  eprint = {arXiv:math/1301.6652},
  year = {2013},
}

\bib{Bentmann:Algebraic_models}{article}{
  author={Bentmann, Rasmus},
  title={Algebraic models in rational equivariant $\KK$-theory},
  note = {in preparation},
}

\bib{Bentmann:Thesis}{article}{
      author={Bentmann, Rasmus},
       title={Filtrated {K}-theory and classification of {$C^*$}-algebras},
        date={University of {G}\"ottingen, 2010},
        note={Diplom thesis, available online at:
            \href{http://www.uni-math.gwdg.de/rbentma/diplom_thesis.pdf}{www.uni-math.gwdg.de/rbentma/diplom\_thesis.pdf}},
}

\bib{BD}{article}{
  author={Bentmann, Rasmus},
  author={Dadarlat, Marius},
  title={One-parameter continuous fields of Kirchberg algebras with rational $\K$\nb-theory},
  eprint = {arXiv:math/1306.1691},
  year = {2013},
}

\bib{BK}{article}{
  author={Bentmann, Rasmus},
  author={K\"ohler, Manuel},
  title={Universal Coefficient Theorems for $C^*$-algebras over finite topological spaces},
  eprint = {arXiv:math/1101.5702},
  year = {2011},
}

\bib{Bentmann-Meyer}{article}{
  author={Bentmann, Rasmus},
  author={Meyer, Ralf},
  title={Circle actions on $C^*$-algebras up to $\KK$-equivalence},
  note = {in preparation},
}

\bib{Blanchard:Subtriviality}{article}{
   author={Blanchard, Etienne},
   title={Subtriviality of continuous fields of nuclear $C^*$-algebras},
   journal={J. Reine Angew. Math.},
   volume={489},
   date={1997},
   pages={133--149},
   issn={0075-4102},
   review={\MRref{1461207}{ (98d:46072)}},
   doi={10.1515/crll.1997.489.133},
}

\bib{Bonkat:Thesis}{thesis}{
  author={Bonkat, Alexander},
  title={Bivariante \(K\)\nobreakdash -Theorie f\"ur Kategorien projektiver Systeme von \(C^*\)\nobreakdash -Al\-ge\-bren},
  date={2002},
  institution={Westf. Wilhelms-Universit\"at M\"unster},
  type={phdthesis},
  language={German},
  note={Available at the Deutsche Nationalbibliothek at \url {http://deposit.ddb.de/cgi-bin/dokserv?idn=967387191}},
}

\bib{Bredon:Cosheaves}{article}{
   author={Bredon, Glen E.},
   title={Cosheaves and homology},
   journal={Pacific J. Math.},
   volume={25},
   date={1968},
   pages={1--32},
   issn={0030-8730},
   review={\newline\MRref{0226631}{37 \#2220}},
}

\bib{Bredon:Sheaf_theory}{book}{
   author={Bredon, Glen E.},
   title={Sheaf theory},
   series={Graduate Texts in Mathematics},
   volume={170},
   edition={2},
   publisher={Springer-Verlag},
   place={New York},
   date={1997},
   pages={xii+502},
   isbn={0-387-94905-4},
   review={\MRref{1481706}{ (98g:55005)}},
   doi={10.1007/978-1-4612-0647-7},
}

\bib{Dadarlat:Finite-dimensional}{article}{
   author={Dadarlat, Marius},
   title={Continuous fields of $C^*$-algebras over finite dimensional
   spaces},
   journal={Adv. Math.},
   volume={222},
   date={2009},
   number={5},
   pages={1850--1881},
  issn={0001-8708},
  review={\MRref{2555914}{2010j:46102}},
  doi={10.1016/j.aim.2009.06.019},
}

\bib{DE}{article}{
   author={Dadarlat, Marius},
   author={Elliott, George A.},
   title={One-parameter continuous fields of Kirchberg algebras},
   journal={Comm. Math. Phys.},
   volume={274},
   date={2007},
   number={3},
   pages={795--819},
  issn={0010-3616},
  review={\newline\MRref{2328913}{2009f:46075}},
  doi={10.1007/s00220-007-0298-z},
}

\bib{Dadarlat--Elliott--Niu}{article}{
   author={Dadarlat, Marius},
   author={Elliott, George A.},
   author={Niu, Zhuang},
   title={One-parameter continuous fields of Kirchberg algebras. II},
   journal={Canad. J. Math.},
   volume={63},
   date={2011},
   number={3},
   pages={500--532},
   issn={0008-414X},
   review={\MRref{2828531}{ (2012d:46121)}},
   doi={10.4153/CJM-2011-001-6},
}

\bib{DM}{article}{
   author={Dadarlat, Marius},
   author={Meyer, Ralf},
   title={E-theory for ${\rm C}^*$-algebras over topological spaces},
   journal={J. Funct. Anal.},
   volume={263},
   date={2012},
   number={1},
   pages={216--247},
  issn={0022-1236},
  review={\MRref{2920847}{}},
  doi={10.1016/j.jfa.2012.03.022},
}

\bib{Dadarlat-Pasnicu:Continuous_fields}{article}{
  author={Dadarlat, Marius},
  author={Pasnicu, Cornel},
  title={Continuous fields of Kirchberg $C^*$\nobreakdash -algebras},
  journal={J. Funct. Anal.},
  volume={226},
  date={2005},
  number={2},
  pages={429--451},
  issn={0022-1236},
  review={\MRref{2160103}{2006g:46098}},
}

\bib{Dixmier:Cstar}{book}{
   author={Dixmier, Jacques},
   title={$C\sp*$-algebras},
   note={Translated from the French by Francis Jellett;
   North-Holland Mathematical Library, Vol. 15},
   publisher={North-Holland Publishing Co.},
   place={Amsterdam},
   date={1977},
   pages={xiii+492},
  isbn={0-7204-0762-1},
   review={\newline\MRref{0458185}{56 \#16388}},
}

\bib{Eilers-Restorff-Ruiz:generalized_meta_theorem}{article}{
  author={Eilers, S\o ren},
  author={Restorff, Gunnar},
  author={Ruiz, Efren},
  title={Strong classification of extensions of classifiable $C^*$-algebras },
   eprint = {arXiv:math/arXiv:1301.7695},
   year={2013},
}

\bib{HRW}{article}{
   author={Hirshberg, Ilan},
   author={R{\o}rdam, Mikael},
   author={Winter, Wilhelm},
   title={$C_0(X)$-algebras, stability and strongly self-absorbing
   $C^*$-algebras},
   journal={Math. Ann.},
   volume={339},
   date={2007},
   number={3},
   pages={695--732},
   issn={0025-5831},
   review={\MRref{2336064}{(2008j:46040)}},
   doi={10.1007/s00208-007-0129-8},
}	

\bib{Inassaridze-Kandelaki-Meyer}{article}{
   author={Inassaridze, Hvedri},
   author={Kandelaki, Tamaz},
   author={Meyer, Ralf},
   title={Localisation and colocalisation of $\KK$-theory},
   journal={Abh. Math. Semin. Univ. Hambg.},
   volume={81},
   date={2011},
   number={1},
   pages={19--34},
   issn={0025-5858},
   review={\MRref{2812030}{}},
   doi={10.1007/s12188-011-0050-7},
}

\bib{Kirchberg}{article}{
  author={Kirchberg, Eberhard},
  title={Das nicht-kommutative Michael-Auswahlprinzip und die Klassifikation nicht-einfacher Algebren},
  language={German, with English summary},
  conference={ title={$C^*$-algebras}, address={M\"unster}, date={1999}, },
  book={ publisher={Springer}, place={Berlin}, },
  date={2000},
  pages={92--141},
  review={\MRref {1796912}{2001m:46161}},
}

\bib{Kirchberg-Phillips:Embedding}{article}{
   author={Kirchberg, Eberhard},
   author={Phillips, N. Christopher},
   title={Embedding of exact $C^*$-algebras in the Cuntz algebra $\mathcal
   O_2$},
   journal={J. Reine Angew. Math.},
   volume={525},
   date={2000},
   pages={17--53},
   issn={0075-4102},
   review={\newline\MRref{1780426}{ (2001d:46086a)}},
   doi={10.1515/crll.2000.065},
}

\bib{MN:Filtrated}{article}{
   author={Meyer, Ralf},
   author={Nest, Ryszard},
   title={${\rm C}^*$-algebras over topological spaces: filtrated
   K-theory},
   journal={Canad. J. Math.},
   volume={64},
   date={2012},
   number={2},
   pages={368--408},
   issn={0008-414X},
   review={\MRref{2953205}{}},
   doi={10.4153/CJM-2011-061-x},
}

\bib{MN:Bootstrap}{article}{
  author={Meyer, Ralf},
  author={Nest, Ryszard},
  title={$C^*$-algebras over topological spaces: the bootstrap class},
  journal={M\"unster J. Math.},
  volume={2},
  date={2009},
  pages={215--252},
 issn={1867-5778},
 review={\MRref {2545613}{}},
 }

\bib{MN:HomologicalI}{article}{
  author={Meyer, Ralf},
  author={Nest, Ryszard},
  title={Homological algebra in bivariant $K$-theory and other triangulated categories. I},
  conference={
    title={Triangulated categories},
  },
  book={
    series={London Math. Soc. Lecture Note Ser.},
    editor={Holm, Thorsten},
    editor={J\o rgensen, Peter},
    editor={Rouqier, Rapha\"el},
    volume={375},
    publisher={Cambridge Univ. Press},
    place={Cambridge},
  },
  date={2010},
  pages={236--289},
  review={\MRref{2681710}{}},
  doi={10.1017/CBO9781139107075.006},
}

\bib{Pasnicu_Rordam:PI}{article}{
   author={Pasnicu, Cornel},
   author={R{\o}rdam, Mikael},
   title={Purely infinite $C^*$-algebras of real rank zero},
   journal={J. Reine Angew. Math.},
   volume={613},
   date={2007},
   pages={51--73},
   issn={0075-4102},
   review={\MRref{2377129}{(2009b:46119)}},
   doi={10.1515/CRELLE.2007.091},
}

\bib{Restorff:Thesis}{thesis}{
  author={Restorff, Gunnar},
  title={Classification of Non-Simple $\textup C^*$\nobreakdash -Algebras},
  type={phdthesis},
  institution={K{\o }benhavns Universitet},
  date={2008},
  isbn={978-87-91927-25-6},
  eprint={http://www.math.ku.dk/~restorff/papers/afhandling_med_ISBN.pdf},
}

\bib{Rordam:Classification_survey}{article}{
  author={R{\o}rdam, Mikael},
  title={Classification of nuclear, simple \(C^*\)\nobreakdash-algebras},
  pages={1--145},
  book={
    author={R{\o}rdam, M. and St{\o}rmer, E.},
    title={Classification of nuclear \(C^*\)\nobreakdash-algebras. Entropy in operator algebras},
    series={Encyclopaedia of Mathematical Sciences},
    volume={126},
    publisher={Springer},
    address={Berlin},
    date={2002},
    isbn={3-540-42305-X},
  },
}

 \bib{RS}{article}{
   author={Rosenberg, Jonathan},
   author={Schochet, Claude},
   title={The K\"unneth theorem and the universal coefficient theorem for Kasparov's generalized $\K $-functor},
   journal={Duke Math. J.},
   volume={55},
   date={1987},
   number={2},
   pages={431--474},
  issn={0012-7094},
  review={\MRref {894590}{88i:46091}},
  doi={10.1215/S0012-7094-87-05524-4},
 }

  \end{biblist}
\end{bibsection}

\end{document}